\renewcommand{\Im}{\mathop{\rm Im}\nolimits}
\theoremstyle{plain} \newtheorem{theorem}{Theorem}[section]
\newtheorem{lemma}[theorem]{Lemma}
\newtheorem{assumption}[theorem]{Assumption}
\newtheorem{proposition}[theorem]{Proposition}
 \theoremstyle{definition}
\newtheorem{definition}[theorem]{Definition} \theoremstyle{remark}
\newtheorem{remark}[theorem]{Remark}
\newcommand{\R}{{\mathbb R}}
\newcommand{\Z}{{\mathbb Z}}
\newcommand{\N}{{\mathbb N}}
\def\im{{\rm i}}
\newcommand{\C}{\mathbb{C}}
\def\({\left(}
\def\){\right)}
\def\<{\left\langle}
\def\>{\right\rangle}
\numberwithin{equation}{section}
\begin{document}

\title{A note on small periodic solutions of discrete nonlinear Klein-Gordon equations}

\author {Masaya Maeda}

\maketitle

\begin{abstract}
In this note, we consider discrete nonlinear Klein-Gordon equations with potential.
By the pioneering work of Sigal, it is known that for the "continuous" nonlinear Klein-Gordon equation, no small time periodic solution exists generically (\cite{Sigal93CMP, SW99IM, BC11AJM}).
However, for the discrete nonlinear Klein-Gordon equations, we show that there exist small time periodic solutions.
\end{abstract}

\section{Introduction}
In this note, we consider discrete nonlinear Klein-Gordon equations:
\begin{equation}\label{DNLKG}
u_{tt}+ H u + m^2 u + u^p=0,\quad (t,j)\in \R\times\Z,
\end{equation}
where $u(t,j)$ is a real valued unknown function, $m>0$, $p\in\N$, $p\geq 2$ and $H=-\Delta + V$ with
\begin{itemize}
\item
$\(\Delta u\)(j)=u(j+1)-2u(j)+u(j+1)$ and,
\item
$\(Vu\)(j)=V(j)u(j)$ and $|V(j)|\to 0$ as $|j|\to \infty$.
\end{itemize}

\noindent
Discrete nonlinear Klein-Gordon equation \eqref{DNLKG} can be considered as a discretized model of the "continuous" nonlinear Klein-Gordon equation:
\begin{equation}\label{NLKG}
u_{tt}+ H u + m^2 u + u^p=0,\quad (t,x)\in \R\times\R^d,
\end{equation}
where $d\geq 1$ and $H=-\Delta + V$ with $\Delta$ being the usual Laplacian on $\R^d$.

We now assume that (in both continuous and discrete cases),
\begin{align}
\sigma_d(H)=\{e\},\quad \text{with}\quad m^2+e>0,
\end{align}
where $\sigma_d(H)$ is the set of discrete spectrum of $H$.
We further set $\phi$ to be the normalized eigenfunction of $H$ associated to $e$.
Under such assumption, for the linear Klein-Gordon equation:
\begin{equation}\label{KG}
u_{tt}+Hu + m^2 u =0,
\end{equation}
one can easily show that there exists a family of periodic solutions
\begin{equation*}
\psi(t)=a \phi \sin \omega t,\ \mathrm{where}\ \omega = \sqrt{m^2+e}\ \mathrm{and}\ a\in\R.
\end{equation*}
Notice that such periodic solutions are stable under suitable linear perturbation of the equation (perturbation of $V$).
This fact comes from the stability of eigenvalues of $H$ under small perturbation of $V$ (see for example \cite{KatoPertBook}, \cite{RS4}).

On the other hand, in the continuous case, it was shown by Sigal \cite{Sigal93CMP} that periodic solutions are unstable under generic nonlinear perturbation.
More precisely, under some generic nondegeneracy assumption (which we will explain below), there are no periodic solutions near $\psi$ for the perturbed equation:
\begin{equation*}
u_{tt}+Hu +m^2 u + \epsilon u^p=0,
\end{equation*}
where $\epsilon$ is small.
By a simple scaling argument, we see that Sigal's result corresponds to the nonexistence of small periodic solution of \eqref{NLKG}.
Later, Soffer-Weinstein \cite{SW99IM} and Bambusi-Cuccagna \cite{BC11AJM} improved this result by showing that all small solutions scatter and therefore no periodic solution exists.
More precisely, in the case $d=3$ and $p\geq 3$, they showed that all small solutions behaves like linear solutions of \eqref{KG} with $V\equiv 0$.
Here, Soffer-Weinstein \cite{SW99IM} considered the case $\sigma_d(H)=\{-\frac 3 4 m^2<e<0\}$ and Bambusi-Cuccagna \cite{BC11AJM} generalized it to the case $\sigma_d(H)=\{-m^2<e_1<\cdots<e_n<0\}$.

We now briefly explain the mechanism which prohibits the existence of periodic solutions by following \cite{BC11AJM}.
First, recall that nonlinear Klein-Gordon equation is a Hamiltonian equation.
That is, \eqref{NLKG} can be rewritten as
\begin{align*}
\dot v = -\nabla_u \mathcal H,\quad \dot u =\nabla_v \mathcal H,
\end{align*}
where $(u,v)=(u,u_t)$ and
\begin{align*}
\mathcal H(u,v)=\frac 1 2\int \(v^2 +|\nabla u|^2 + Vu + m^2u^2\)\,dx + \frac{1}{p+1}\int u^{p+1}\,dx.
\end{align*}
We now decompose $(u,v)$ with respect to the spectrum of $H=-\Delta+V$ such as
$$
u=p\phi+P_c u,\quad v=q \phi+ P_cv,\quad\text{where}\quad P_c=1-\<\cdot,\phi\>\phi.$$
Further, we introduce the complex variables 
$$\xi=\frac{q \omega^{1/2}+\im p \omega^{-1/2}}{\sqrt{2}},\quad
 f =\frac{ B^{1/2}P_c u + \im B^{-1/2}P_c v}{\sqrt 2}\quad \text{where}\quad B=\sqrt{H+m^2}.$$
The complex variables now satisfy the system
\begin{align}\label{eq:1}
\im \dot \xi =\partial_{\bar \xi}\mathcal H,\quad \im \dot f = \nabla_{\bar f}\mathcal H.
\end{align}
Our task now becomes to show that $\xi$ decays to $0$ as $t\to \infty$.
In particular, we want to show that the system exhibits some kind of dumping for $\xi$ even though it is a Hamiltonian (conserved) system.
To show this we use Birkhoff normal form argument.
After a canonical change of the coordinate, the Hamiltonian can be reduced to the effective Hamiltonian $\mathcal H_{\mathrm{eff}}$ "$+$error", where
\begin{align*}
\mathcal H_{\mathrm{eff}}=\omega |\xi|^2 + \<Bf,f\>+\<\xi^N G,f\>,
\end{align*}
for some $G\in \mathcal S$ (Schwartz function).
Here, $\<f,g\>=\mathrm{Re}\int f\bar g\,dx$ and $N$ is the smallest natural number satisfying $(N-1)^2 \omega^2<m^2<N^2 \omega^2$.
\begin{remark}
The condition $\sigma_d(H)=\{-\frac 3 4 m^2<e<0\}$ given in Soffer-Weinstein \cite{SW99IM} corresponds to the case $N=2$.
\end{remark}
\noindent
Then, by \eqref{eq:1}, the equations for $\xi$ and $f$ (ignoring the error terms) are given by
\begin{align*}
\im \dot \xi = \omega \xi + N \bar \xi^{N-1}\int \bar G f\,dx,\quad \im \dot f = Bf + \xi^NG.
\end{align*}
Now, if we take $\im \dot \xi\sim \omega \xi$ as a first approximation, we have $f\sim -\xi^N(B-\omega N-\im 0)^{-1}G$.
Then, substituting this again into the equation of $\xi$ and multiply by $\bar \xi$ and taking the imaginary part, we obtain,
\begin{align*}
\frac {d}{dt}|\xi|^2 =-2N \omega^2 \Gamma |\xi|^{2N},\quad \Gamma=\Im \<(B-\omega N-\im 0)^{-1}G,G\>.
\end{align*}
Notice that one can show $\Gamma\geq 0$.
Therefore, if we assume $\Gamma\neq 0$ (which implies $\Gamma>0$), we see that $\xi$ decays, which is the desired result.

The formula of $\Gamma$ is the nonlinear analogue of Fermi Golden Rule which appears in the theory of resonance in quantum mechanics (see, for example \cite{RS4, SW98GAFA}).
In the following, the word Fermi Golden Rule will simply mean for the coefficient $\Gamma$.
The nonvanishing of Fermi Golden Rule ($\Gamma\neq 0$) is assumed for all papers \cite{Sigal93CMP}, \cite{SW99IM} and \cite{BC11AJM}.

Notice that it is crucial to have $\omega N \in \sigma_{\mathrm{cont}}(B)$ (continuous spectrum of $B$), because if $\omega N \not\in \sigma(B)$, then $$\Im \<(B-\omega N-\im 0)^{-1}G,G\>=\Im \<(B-\omega N)^{-1}G,G\>=0,$$
and if $\omega N \in \sigma_d(B)$, we will not be able to define $(B-\omega N -\im 0)^{-1}$.
As a conclusion, the nonvanishing assumption of Fermi Golden Rule is a nondegeneracy condition related to the nonlinear interaction between continuous spectrum and the discrete spectrum.
The nonlinear interaction occurs because the nonlinearity create new frequencies $n^2 \omega^2$ which eventually collides with the continuous spectrum of $H+m^2$ which is $[m^2,\infty)$ in the continuous case.

Now, we come back to the discrete case.
In this case, the continuous spectrum of $H$ is $[0,4]$.
This implies that there is a possibility that the frequencies $n^2 \omega^2$ may "jump over" the continuous spectrum (there are also possibilities that $m^2+e>m^2+4$ from the beginning).
In such cases, one can expect that there are no nonlinear interaction between the point spectrum and the continuous spectrum.
This means that one can expect the existence of periodic solutions of the nonlinear problem \eqref{DNLKG} and this is what we show in this note.

\begin{remark}
The above observation for the discrete case was first given by \cite{Cuccagna10DCDS} for the discrete nonlinear Schr\"odinger equation in the context of the asymptotic stability of small standing waves.
\end{remark}

\begin{assumption}\label{ass:1}
We assume that $\sigma_d(H)=\{e\}$ where $e\not\in [0,4]$ with $\mathrm{dim}\mathrm{ker}(H-e)=1$.
Further, we assume $m^2+e>0$.
We set $\omega=\sqrt{m^2+e}$ and assume
\begin{equation}\label{eq:nonres}
n^2 \omega^2-m^2\not\in [0,4]\quad \mathrm{for\ all}\ n\in\N.
\end{equation}
We set $\phi$ to be the normalized eigenfunction of $e$.
\end{assumption}

\begin{remark}
The condition \eqref{eq:nonres} is the crucial assumption that guarantees there is no nonlinear interaction between the point spectrum and the continuous spectrum.
Notice that this kind of condition is never satisfied in the continuous case because in this case, the r.h.s.\ of \eqref{eq:nonres} has to become $[0,\infty)$.
\end{remark}

\begin{remark}
If we assume $\sum_{j\in \Z} (1+|j|)|V(j)|<\infty$, we have $\mathrm{dim}\mathrm{ker}(H - e)=1$.
See Lemma 5.3 of \cite{CT09SIAM}.
\end{remark}

In this note, we use the following notations.
\begin{itemize}
\item
$\<u,v\>=\sum_{j \in \Z}u(j)v(j)$.
\item
$l_e^a:=\{u=\{u(j)\}_{j\in\Z}\ |\ \|u\|_{l_e^a}:=\(\sum_{j\in\Z}e^{2a|j|}|u(j)|^2\)^{1/2}<\infty\}$, and $\<u,v\>_{l_e^a}:=\<T_a u, T_a v\>$, where $(T_a u)(j):=e^{a|j|}u(j)$.
\item
For Banach spaces $X,Y$, $\mathcal L(X;Y)$ will mean the Banach space of bounded operators from $X$ to $Y$.
If $X=Y$, we set $\mathcal L(X):=\mathcal L(X;X)$.
Further, we inductively set $\mathcal L^n(X;Y)$ by $\mathcal L^n(X;Y):=\mathcal L(X;\mathcal L^{n-1}(X;Y))$ for $n\geq 2$ and $\mathcal L^1(X;Y):=\mathcal L(X;Y)$.
\item
If there exists $C>0$ which is independent of parameters which we are considering and we have $a\leq Cb$, then we write $a\lesssim b$.
\item
For a Banach space $X$ and $r>0$, we set $B_X(r):=\{u\in X\ |\ \|u\|_{X}< r\}$, where $\|\cdot\|_X$ is the norm of $X$.
\item
By $C^\omega(B_X(r),Y)$ for $X=\R,\C$, we mean the set of (real) analytic $Y$-valued functions.
In the case $X=\C$, we mean that $f\in C^\omega(B_{\C}(r),Y)$ is real analytic with respect to $z_R$, $z_I$ where $z=z_R+\im z_I$. 
\item
$\delta_{ij}=1$ if $i=j$ and $\delta_{ij}=0$ if $i\neq j$.
\end{itemize}

\begin{remark}
It is well known that $\phi$ decays exponentially.
Therefore, there exists some $a>0$ s.t.\  $\phi\in l_e^{a}$.
For the convenience of the readers, we have proved this fact in the appendix of this note.
\end{remark}

The main result in this note is the following.

\begin{theorem}\label{thm:1}
Under Assumption \ref{ass:1}, there exist $a>0$ and $\delta_0>0$ s.t.\ there exist 
\begin{align*}
\Phi\in C^\omega(B_\C(\delta_0),l_e^{a})\quad \mathrm{and}\quad E\in C^\omega(B_\R(\delta_0^2),\R),
\end{align*} 
s.t.\ $\Phi[z]$ is a real valued solution of \eqref{DNLKG} if $z$ satisfies 
\begin{align*}
\im z_t = \omega(|z|^2)z,\quad \mathrm{ where}\quad \omega(|z|^2)=\sqrt{E(|z|^2)+m^2}.
\end{align*}
Further, we have
\begin{align}
&\|\Phi[z]-(z+\bar z) \phi\|_{l_e^{a}}\lesssim |z|^p,\label{eq:5}\\&
|E(|z|^2)-e|\lesssim \begin{cases} |z|^{p-1} \quad \mathrm{if}\ p:\mathrm{odd}, \\ |z|^{2p-2} \quad \mathrm{if}\ p:\mathrm{even}.\end{cases}\label{eq:6}
\end{align}
\end{theorem}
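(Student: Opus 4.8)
The plan is to recast the search for a periodic solution as a single time-independent functional equation for the profile $\Phi$. Writing $u(t)=\Phi[z(t)]$ with $\im z_t=\omega(|z|^2)z$ and $\omega^2=E(|z|^2)+m^2$, and using that $|z|^2$ is conserved along the flow (so $\omega$ is constant on each orbit), one computes $u_t=\mathcal X\Phi$ and $u_{tt}=\mathcal X^2\Phi$ for the vector field $\mathcal X:=-\im\omega(z\partial_z-\bar z\partial_{\bar z})$. Since $\mathcal X(|z|^2)=0$, substituting into \eqref{DNLKG} shows that $u$ solves the equation for every initial $z$ if and only if $\Phi$ solves
\begin{equation}\label{eq:profile}
\mathcal X^2\Phi+(H+m^2)\Phi+\Phi^p=0,\qquad \omega^2=E(|z|^2)+m^2,
\end{equation}
pointwise on $B_\C(\delta_0)$, where $\Phi^p$ is the pointwise $p$-th power in $j$. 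Thus the whole theorem reduces to producing a real-valued, real-analytic solution $\Phi$ of \eqref{eq:profile} together with the scalar function $E$.

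Next I would run a Lyapunov--Schmidt reduction organized by the homogeneity in $(z,\bar z)$. On a monomial $z^j\bar z^k$ the operator $\mathcal X^2$ acts as multiplication by $-(j-k)^2\omega^2$, so writing $\Phi=\sum_{j,k}\Phi_{jk}z^j\bar z^k$ and setting $n=j-k$, $\omega_0^2=e+m^2$, each coefficient is governed by the operator $L_n:=H+m^2-n^2\omega_0^2$. The point of Assumption~\ref{ass:1} is exactly the spectral dichotomy for the $L_n$: since $\sigma(H)=\{e\}\cup[0,4]$, the operator $L_{\pm1}=H-e$ has one-dimensional kernel $\mathrm{span}\{\phi\}$, while \eqref{eq:nonres} guarantees $n^2\omega_0^2\notin[0,4]+m^2$ and $n^2\omega_0^2\neq\omega_0^2$ for every other $n$, so that $L_n$ is invertible. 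Moreover, being off the essential spectrum $[0,4]+m^2$, each such $L_n^{-1}$ has an exponentially decaying Green's function and hence extends to a bounded operator on $l_e^{a}$ for $a$ small enough; because $\|L_n^{-1}\|\lesssim (1+n^2)^{-1}$, these bounds are summable and, crucially, there are only finitely many $n$ for which $n^2\omega_0^2$ is close to $[0,4]+m^2$, so no small-divisor difficulty arises. This is where the discreteness (boundedness of the continuous spectrum) is used decisively.

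With these resolvent bounds in hand I would solve \eqref{eq:profile} by the analytic implicit function theorem in a Banach space of power series $\Phi=\sum\Phi_{jk}z^j\bar z^k$ with $\Phi_{jk}\in l_e^{a}$ and $\sum\|\Phi_{jk}\|_{l_e^{a}}\rho^{j+k}<\infty$. Let $P$ denote the projection extracting the $\phi$-components of the coefficients of $z$ and $\bar z$ (the resonant directions), and decompose $\Phi=(z+\bar z)\phi+W$ with $PW=0$, treating $E$ as a scalar parameter. The $(I-P)$-projected equation (inverting the $L_n$) has invertible linearization at $W=0$, $E=e$, so it determines a unique real-analytic $W=W[z]$; since the only inhomogeneity is the nonlinearity $\Phi^p$, which is $O(|z|^p)$, one gets $\|W[z]\|_{l_e^{a}}\lesssim|z|^p$, i.e.\ \eqref{eq:5}. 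The $P$-projected (bifurcation) equation is the scalar solvability condition $\langle\,\cdot\,,\phi\rangle=0$ in the modes $n=\pm1$, which the implicit function theorem solves for $E=E(|z|^2)$ with $E(0)=e$. Tracking the lowest order at which the nonlinearity feeds mode $n=1$ yields \eqref{eq:6}: for $p$ odd the term $(z+\bar z)^p\phi^p$ already contains the mode-$1$ monomial $z^{(p+1)/2}\bar z^{(p-1)/2}$, forcing $E-e=O(|z|^{p-1})$, whereas for $p$ even all modes of $(z+\bar z)^p$ are even and mode $1$ appears only through the product $(z+\bar z)^{p-1}\phi^{p-1}W$ at order $2p-1$, giving $E-e=O(|z|^{2p-2})$.

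Reality of $\Phi$ follows because the recursion preserves the symmetry $\overline{\Phi_{jk}}=\Phi_{kj}$ and $\phi$ is real, so $\Phi[z]$ is real-valued; assembling $W$ and $E$ gives the claimed analytic maps. I expect the main technical obstacle to be the second step: establishing the boundedness of the resolvents $L_n^{-1}$ on the exponentially weighted spaces $l_e^{a}$ with a single fixed weight $a$ and summable norms, i.e.\ quantifying the exponential decay of the discrete Green's functions off the spectrum uniformly in $n$. Once that is secured, the remaining analysis is a soft application of the analytic implicit function theorem, with no small-divisor or KAM input required.
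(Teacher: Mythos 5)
Your proposal is correct and follows essentially the same route as the paper: the same power-series ansatz in $z,\bar z$ with $l_e^a$-valued coefficients, the same use of \eqref{eq:nonres} to invert $H+m^2-n^2(e+m^2)$ for $n\neq 1$ with resolvent bounds $O((1+n)^{-2})$ on a weighted power-series space, a Lyapunov--Schmidt splitting in which the $\phi$-component of the mode-$1$ equation determines $E$ while the complement is solved by a fixed-point argument, and the identical parity argument for \eqref{eq:6}. The only cosmetic differences are that the paper defines $\varepsilon$ explicitly as a functional of the remainder and runs a contraction mapping rather than quoting the analytic implicit function theorem, and the technical resolvent bound on $l_e^a$ that you flag as the main obstacle is exactly what the paper imports from Lemma A.1 of \cite{MDNLS1}.
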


We note that Theorems \ref{thm:1} corresponds to Theorems 1.6 of \cite{MDNLS1} for the discrete nonlinear Schr\"odinger equations with potential.
In the (continuous and discrete) nonlinear Schr\"odinger equation case, because of the gauge invariance, one can easily show that there exist small periodic solutions of the form $e^{\im \omega t}\phi(x)$.
However, under the assumption of the nonvanishing of Fermi Golden Rule, no small quasi-periodic solution exists (\cite{Sigal93CMP, TY02ATMP, TY02CPDE, TY02IMRN,SW04RMP,CuMaAPDE}).
On the other hand, due to the boundedness of the continuous spectrum, one can expect that there exist quasi-periodic solutions of two modes and this is shown in \cite{MDNLS1}.

The proof of Theorem \ref{thm:1} is parallel to \cite{MDNLS1} and actually more simple than \cite{MDNLS1} because in this case we are only handling a periodic solution and not a quasi-periodic solution.
However, we would like to present the proof in this note because of its simplicity and moreover we are not aware for similar results of this kind besides \cite{MDNLS1}.

\section{Proof of Theorem \ref{thm:1}}\label{sec:proof1}
In this section, we prove Theorem \ref{thm:1}.
We start from the ansatz:
\begin{equation}\label{eq:10}
\Phi[z]=\(z +\bar z\) \phi + \sum_{n\geq 0} \(z^n +\bar z^n\) v _n,
\end{equation}
with $\<v_1,\phi\>=0$.
We assume that $z$ satisfies 
\begin{equation}\label{eq:11}
\im \dot z = \sqrt{E+m^2} z,
\end{equation}
 where $E=e+\varepsilon$.
Then, our task is to determine $\mathbf v=\{v_m\}_{m\geq 0}$ and $\varepsilon$ for given $z$ to make $\Phi[z]$ to be the solution of \eqref{DNLKG} when $z$ satisfies \eqref{eq:11}.
Assuming \eqref{eq:11}, we have
\begin{align*}
\frac{d^2}{dt^2}\Phi[z]&=-(e+\varepsilon + m^2)\(z+\bar z\)\phi - \sum_{n\geq 1} n^2(e+\varepsilon+m^2)\( z^n + \bar z^n\) v_n,\\
\(H+m^2\)\Phi[z]&= (e+m^2)(z+\bar z)\phi + \sum_{n\geq 0}(z^n+\bar z^n)(H+m^2)v_n,\\
(\Phi[z])^p&=\sum_{n\geq 0} (z^n+\bar z^n)w_n(|z|^2,\mathbf v),
\end{align*}
where the third line is the definition of $w_n(|z|^2,\mathbf v)$.
Therefore, we have the following system:
\begin{align}
&\varepsilon = \< \phi , w_1(|z|^2,\mathbf v)\>,\label{eq:12}\\
&(H-e)v_1 =\varepsilon v_1 -(1-Q) w_1(|z|^2,\mathbf v),\label{eq:13}\\
&\(H-(n^2(e+m^2)-m^2)\)v_n=n^2 \varepsilon v_n - w_n(|z|^2,\mathbf v),\quad n\neq 1,\label{eq:14}
\end{align}
where $Q=\<\cdot,\phi\>\phi$.
We define $\varepsilon(|z|^2,\mathbf v)$ to be the r.h.s.\ of \eqref{eq:12}.
Thus, it remains to determine $\mathbf v$ for given $|z|^2$.
We now set
\begin{align*}
X_{a,r}:=\{ \mathbf v=\{v_n\}_{n\geq 0}\subset l_e^a\ |\ \|\mathbf v\|_{a,r}:=\sum_{n\geq 0}r^n \|v_n\|_{l_e^a}<\infty\},
\end{align*}
and try to reformulate \eqref{eq:13}-\eqref{eq:14} as a fixed point problem (we think \eqref{eq:12} is the definition of $\varepsilon$).
Set
\begin{align}\label{eq:14.1}
\mathcal P \mathbf v :=\{\(1-\delta_{n1}Q \)v_n\}_{n\geq 0},\quad \text{and}\quad X_{a,r}^c:=\mathcal P X_{a,r}.
\end{align}
We next set the operators $\mathcal A$, $\mathcal B$ on $X_{a,r}^c$ as
\begin{align}
&\mathcal A \mathbf v = \{(H-(n^2(e+m^2)-m^2))^{-1}v_n\}_{n\geq 0},\label{eq:14.2}\\&
\mathcal B \mathbf v= \{(H-(n^2(e+m^2)-m^2))^{-1}n^2 v_n\}_{n\geq 0},\label{eq:14.3}
\end{align}
where $\mathbf v=\{v_n\}_{n\geq 0} \in X^c_{a,r}$.

\begin{remark}
By \eqref{eq:nonres} in Assumption \ref{ass:1}, we see that each $(H-(n^2(e+m^2)-m^2))^{-1}(1-\delta_{n1}Q)$ exists.
Notice that if \eqref{eq:nonres} do not hold, we will not be able to invert $H-(n^2(e+m^2)-m^2)$ for some $n\geq 1$ and our strategy completely fails. 
Further by the discussion given in the introduction of this note, we expect there exists no periodic solution.
\end{remark}

To express $\{w_n(|z|^2,\mathbf v)\}_{n\geq 0}$, we introduce the following multilinear operator on $X_{a,r}$.

\begin{definition}
Let $\mathbf v_k=\{v_{k,n}\}_{n\geq 0}$ for $k=1,2$.
We define $\mathcal M(|z|^2,\mathbf v_1,\mathbf v_2)=\{M_n(|z|^2,\mathbf v_1,\mathbf v_2)\}_{n\geq 0}$ by
\begin{align}\label{eq:14.30}
\sum_{n\geq 0} (z^n+\bar z^n)M_n(|z|^2,\mathbf v_1,\mathbf v_2)=\sum_{n_1\geq 0} (z^{n_1}+\bar z^{n_1})v_{1,n_1}\sum_{n_2\geq 0} (z^{n_2}+\bar z^{n_2})v_{2,n_2}.
\end{align}
We inductively define $\mathcal M_k(|z|^2,\mathbf v_1,\cdots,\mathbf v_k)$ by 
\begin{align*}
\mathcal M_2(|z|^2,\mathbf v_1,\mathbf v_2)&=\mathcal M(|z|^2,\mathbf v_1,\mathbf v_2)\quad \mathrm{and}\\ \mathcal M_k(|z|^2,\mathbf v_1,\cdots,\mathbf v_k)&=\mathcal M_{2}(|z|^2,\mathbf v_1,\mathcal M_{k-1}(|z|^2,\mathbf v_2,\cdots,\mathbf v_k)),
\end{align*}
and set $\mathcal M_k(|z|^2,\mathbf v):=\mathcal M_k(|z|^2,\mathbf v,\cdots,\mathbf v)$.
\end{definition}
Let 
\begin{align}\label{eq:14.31}
\Phi_0:=\{\delta_{n1}\phi\}_{n\geq 0}.
\end{align}
Then, we can express $\mathcal W(|z|^2,\mathbf v):=\{w_n(|z|^2,\mathbf v)\}_{n\geq 0}$ as
\begin{align}\label{eq:14.4}
\mathcal W(|z|^2,\mathbf v)=\mathcal M_p(|z|^2,\Phi_0+\mathbf v).
\end{align}
Using \eqref{eq:12}, \eqref{eq:14.1}, \eqref{eq:14.2}, \eqref{eq:14.3}, \eqref{eq:14.31} and \eqref{eq:14.4}, we define
\begin{align}\label{eq:14.5}
\mathbf \Phi(|z|^2,\mathbf v):=\varepsilon(|z|^2,\mathbf v)\mathcal B \mathbf v +\mathcal A \mathcal P \mathcal M_p(|z|^2,\Phi_0+\mathbf v).
\end{align}
Then, we can reformulate the system \eqref{eq:13}-\eqref{eq:14} as a fixed point problem
\begin{align}\label{eq:14.6}
\mathbf v=\mathbf \Phi(|z|^2,\mathbf v).
\end{align}
In the following, we show that $\mathbf \Phi$ is well defined and it is a contraction mapping in a small ball of $X_{a,r}^c$ provided $r>0$ sufficiently small.
\begin{lemma}\label{lem:1}
$\mathcal A, \mathcal B\in \mathcal L(X_{a,r}^c)$.
\end{lemma}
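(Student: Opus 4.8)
The plan is to exploit the fact that both $\mathcal A$ and $\mathcal B$ act diagonally in the index $n$. Writing $\lambda_n:=n^2(e+m^2)-m^2=n^2\omega^2-m^2$, we have, for $\mathbf v=\{v_n\}\in X_{a,r}^c$, that $(\mathcal A\mathbf v)_n=(H-\lambda_n)^{-1}(1-\delta_{n1}Q)v_n$ and $(\mathcal B\mathbf v)_n=n^2(H-\lambda_n)^{-1}(1-\delta_{n1}Q)v_n$ (recall that on $X_{a,r}^c$ the component $v_1$ already lies in $\mathrm{Ran}(1-Q)$, so the displayed formulas agree with \eqref{eq:14.2}--\eqref{eq:14.3}). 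Setting
\[
C_n:=\|(H-\lambda_n)^{-1}(1-\delta_{n1}Q)\|_{\mathcal L(l_e^a)},
\]
the very definition of $\|\cdot\|_{a,r}$ gives $\|\mathcal A\mathbf v\|_{a,r}\le\big(\sup_{n\ge0}C_n\big)\|\mathbf v\|_{a,r}$ and $\|\mathcal B\mathbf v\|_{a,r}\le\big(\sup_{n\ge0}n^2C_n\big)\|\mathbf v\|_{a,r}$. Hence the whole lemma reduces to the two uniform bounds $\sup_nC_n<\infty$ and $\sup_nn^2C_n<\infty$.

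Next I would establish the single-mode estimate $C_n\lesssim d_n^{-1}$, where $d_n:=\mathrm{dist}(\lambda_n,\sigma(H))$ and $\sigma(H)=[0,4]\cup\{e\}$. By \eqref{eq:nonres} together with $e\notin[0,4]$ we have $\lambda_n\notin[0,4]$ for every $n$, while $\lambda_n=e$ happens only for $n=1$ (since $(n^2-1)\omega^2=0$ forces $n=1$); thus $d_n>0$ for all $n\ge0$, with the convention that for $n=1$ one measures the distance to $[0,4]$ only, the eigenvalue $e$ being removed by $1-Q$. To pass from $\ell^2$ to the weighted space $l_e^a$ I would use that $T_a$ is an isometry $l_e^a\cong\ell^2$, so that $\|(H-\lambda)^{-1}\|_{\mathcal L(l_e^a)}=\|T_a(H-\lambda)^{-1}T_a^{-1}\|_{\mathcal L(\ell^2)}$, together with the Combes--Thomas estimate: for $\mathrm{dist}(\lambda,\sigma(H))=d>0$ and $a$ small relative to $d$, the Green's function obeys $|(H-\lambda)^{-1}(j,k)|\lesssim d^{-1}e^{-\gamma|j-k|}$ with decay rate $\gamma>a$. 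Since $|j|-|k|\le|j-k|$, the conjugate $T_a(H-\lambda)^{-1}T_a^{-1}$ then has matrix elements dominated by $d^{-1}e^{-(\gamma-a)|j-k|}$, and a Schur/Young estimate yields $\|(H-\lambda)^{-1}\|_{\mathcal L(l_e^a)}\lesssim d^{-1}$. The case $n=1$ is handled the same way after writing the reduced resolvent $(H-e)^{-1}(1-Q)$ as a contour integral of $(H-z)^{-1}$ over a curve enclosing $[0,4]$ but not $e$; each resolvent on the (compact) contour has an exponentially decaying kernel with a uniform rate, and $1-Q$ is bounded on $l_e^a$ because the functional $v\mapsto\langle v,\phi\rangle$ is continuous on $l_e^a$ by Cauchy--Schwarz and its range $\phi$ lies in $l_e^a$.

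It then remains to check uniformity in $n$ and the extra factor $n^2$. Since $\lambda_n=n^2\omega^2-m^2\to+\infty$, we have $d_n\to\infty$, and as the finitely many small-$n$ values $d_n$ are positive, we get $d_0:=\inf_nd_n>0$; choosing a single $a>0$ small relative to $d_0$ makes the Combes--Thomas bound valid simultaneously for all $n$. This gives $\sup_nC_n\le C/d_0<\infty$, so $\mathcal A\in\mathcal L(X_{a,r}^c)$. For $\mathcal B$ we exploit the growth of $d_n$: $n^2C_n\lesssim n^2/d_n$, and since for large $n$ the nearest spectral point is $4$, so that $d_n=\lambda_n-4\sim n^2\omega^2$, we get $n^2/d_n\to\omega^{-2}$; combined with the finitely many small-$n$ terms this gives $\sup_nn^2C_n<\infty$, hence $\mathcal B\in\mathcal L(X_{a,r}^c)$.

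The step I expect to be the real work is the weighted resolvent bound of the second paragraph: the estimate $C_n\lesssim d_n^{-1}$ must hold in the exponentially weighted norm, not merely in $\ell^2$, so one genuinely needs the Combes--Thomas mechanism (exponential decay of the Green's function at a rate exceeding $a$) rather than the naive self-adjoint identity $\|(H-\lambda)^{-1}\|_{\mathcal L(\ell^2)}=d^{-1}$. The only other delicate point is the eigenvalue $\lambda_1=e$, where the reduced resolvent must be controlled in $l_e^a$; this is precisely why the projection $1-Q$ enters through $\mathcal P$, and its $l_e^a$-boundedness rests on $\phi\in l_e^a$.
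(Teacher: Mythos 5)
Your argument is correct, and its skeleton coincides with the paper's: both reduce the lemma, via the diagonal action of $\mathcal A$, $\mathcal B$ and the definition of $\|\cdot\|_{a,r}$, to a single-mode bound on $C_n=\|(H-(n^2(e+m^2)-m^2))^{-1}(1-\delta_{n1}Q)\|_{\mathcal L(l_e^a)}$ that decays like $(1+n)^{-2}$, this decay being exactly what absorbs the factor $n^2$ in $\mathcal B$. The genuine difference is how that estimate is obtained: the paper imports it purely by citation (Lemma A.1 of \cite{MDNLS1}), whereas you prove it --- Combes--Thomas decay of the Green's function plus a Schur test for $n\neq 1$, a contour-integral (Riesz) representation of the reduced resolvent $(H-e)^{-1}(1-Q)$ for $n=1$, and uniformity in $n$ from $d_n=\mathrm{dist}(\lambda_n,\sigma(H))\geq \inf_m d_m>0$ together with $n^2/d_n\to\omega^{-2}$. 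This makes the lemma self-contained, which is a real gain. You could streamline it using the paper's own appendix Lemma \ref{lem:2} ($T_aHT_a^{-1}=H+B_a$ with $\|B_a\|_{l^2\to l^2}\lesssim a$): then $T_a(H-\lambda_n)^{-1}T_a^{-1}=(H+B_a-\lambda_n)^{-1}$ exists by a Neumann series once $a$ is small compared with $\inf_m d_m$, with norm at most $2/d_n$, so no kernel bounds or Schur test are needed, and conjugating the contour integral handles $n=1$ in the same way. Two inessential points: for large $n$ the nearest spectral point is $\max(4,e)$ rather than $4$ when $e>4$ (the asymptotics $d_n\sim n^2\omega^2$ is unaffected), and the $l_e^a$-boundedness of $Q$ requires $\phi\in l_e^a$, i.e.\ Proposition \ref{prop:1} of the appendix, so $a$ must in addition be taken small in that sense.
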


\begin{proof}
First, we have $$\| \(H-(n^2(e+m^2)-m^2)\)^{-1}(1-\delta_{n1}Q)\|_{\mathcal L(l_e^a)}\lesssim (1+n)^{-2}, $$
see, for example, Lemma A.1 of \cite{MDNLS1}.
Therefore, $\|(\mathcal A \mathbf v)_n\|_{l_e^a}\lesssim (1+n)^{-2} \|v_n\|_{l_e^a}$ and $\|(\mathcal B v)_n\|_{l_e^a}\lesssim \|v_n\|_{l_e^a}$.
Thus, we have the conclusion.
\end{proof}

\begin{lemma}\label{lem:1.1}
Let $\delta<r$.
Then, we have 
\begin{align*}
\mathcal M_p\in C^\omega(B_\R(\delta^2);\mathcal L^p(X_{a,r};X_{a,r}))\quad \text{with}\quad \sup_{z\in B_\C(\delta)}\|\mathcal M_p(|z|^2,\cdots)\|_{\mathcal L^p(X_{a,r};X_{a,r})}\lesssim 1.
\end{align*}
\end{lemma}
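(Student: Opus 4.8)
The plan is to argue by induction on the multilinearity degree, with essentially all the content sitting in the base case $\mathcal M=\mathcal M_2$. Two elementary facts will be used throughout. First, $l_e^a$ is a Banach algebra under pointwise multiplication: from $e^{2a|j|}|u(j)|^2\le\|u\|_{l_e^a}^2$ one gets the pointwise bound $|u(j)|\le e^{-a|j|}\|u\|_{l_e^a}$, whence $\|uv\|_{l_e^a}^2=\sum_j e^{2a|j|}|u(j)|^2|v(j)|^2\le\|v\|_{l_e^a}^2\sum_j|u(j)|^2\le\|u\|_{l_e^a}^2\|v\|_{l_e^a}^2$. Second, the arithmetic identity $|n_1-n_2|+2\min(n_1,n_2)=n_1+n_2$, which is what converts the low frequencies produced by the nonlinearity into powers of $|z|^2$.

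First I would make the dependence on $|z|^2$ explicit. Writing $s=|z|^2$ and using
$(z^{n_1}+\bar z^{n_1})(z^{n_2}+\bar z^{n_2})=(z^{n_1+n_2}+\bar z^{n_1+n_2})+s^{\min(n_1,n_2)}(z^{|n_1-n_2|}+\bar z^{|n_1-n_2|})$, matching coefficients of $(z^n+\bar z^n)$ in \eqref{eq:14.30} gives
\begin{align*}
M_n(s,\mathbf v_1,\mathbf v_2)=\sum_{n_1+n_2=n}v_{1,n_1}v_{2,n_2}+\sum_{m\ge0}s^m\big(v_{1,n+m}v_{2,m}+v_{1,m}v_{2,n+m}\big)
\end{align*}
(with the obvious harmless modification at $n=0$). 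This exhibits $\mathcal M(s,\cdot,\cdot)$ as a power series in the single real variable $s$ whose coefficients are bounded bilinear operators on $X_{a,r}$; the fact that $s=|z|^2$ enters, rather than $z$ and $\bar z$ separately, is precisely what places the parameter in $B_\R(\delta^2)$.

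Next I would prove the uniform bound. Setting $b_{k,n}:=\|v_{k,n}\|_{l_e^a}$ and invoking the algebra property, $\|M_n\|_{l_e^a}$ is dominated by the corresponding sum of $b$'s. Summing $\sum_n r^n(\cdots)$, the ``sum'' part is exactly the Cauchy product and equals $\|\mathbf v_1\|_{a,r}\|\mathbf v_2\|_{a,r}$. For the ``difference'' part the generic term is $r^{|n_1-n_2|}s^{\min(n_1,n_2)}b_{1,n_1}b_{2,n_2}=r^{n_1+n_2}(s/r^2)^{\min(n_1,n_2)}b_{1,n_1}b_{2,n_2}$, and since $\delta<r$ forces $s<\delta^2<r^2$, the factor $(s/r^2)^{\min(n_1,n_2)}\le1$ may simply be discarded, again leaving $\le\|\mathbf v_1\|_{a,r}\|\mathbf v_2\|_{a,r}$. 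Hence $\|\mathcal M(s,\mathbf v_1,\mathbf v_2)\|_{a,r}\le2\|\mathbf v_1\|_{a,r}\|\mathbf v_2\|_{a,r}$ for every $z\in B_\C(\delta)$. For analyticity I would regroup $\mathcal M(s,\cdot,\cdot)=\sum_{m\ge0}s^m\mathcal M^{(m)}$ with $s$-independent $\mathcal M^{(m)}\in\mathcal L^2(X_{a,r};X_{a,r})$; the same computation, using $b_{k,m}\le r^{-m}\|\mathbf v_k\|_{a,r}$, yields $\|\mathcal M^{(m)}\|_{\mathcal L^2(X_{a,r};X_{a,r})}\lesssim r^{-2m}$, so the operator-valued series converges in operator norm for $|s|<r^2$ and defines a map that is real analytic (in fact holomorphic in $s$) on $B_\R(\delta^2)$. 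This disposes of $p=2$.

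Finally, for the induction step, since $\mathcal M_p(s,\cdots)=\mathcal M_2(s,\mathbf v_1,\mathcal M_{p-1}(s,\cdots))$, both conclusions are inherited: composing the bounded bilinear map $\mathcal M_2$ with bounded multilinear maps is again bounded multilinear (with norm controlled by a constant depending only on $p$), and products of convergent operator-valued power series in $s$ are again convergent power series. I expect the one genuine obstacle to be the ``difference'' part of the base-case estimate, since there the frequencies generated by $z^{n_1}\bar z^{n_2}$ can fall below the naive order $n_1+n_2$; taming these low-order contributions is exactly what forces the hypothesis $\delta<r$ (equivalently $|z|^2<r^2$) and is the mechanism by which $|z|^2$ enters analytically. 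Everything else reduces to bookkeeping with the Cauchy product and the algebra inequality.
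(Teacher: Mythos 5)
Your proposal is correct and takes essentially the same approach as the paper's proof: the same splitting of the product into the Cauchy-product part and the difference-frequency part (the latter rewritten as a power series in $s=|z|^2$), the same weighted-norm estimates resting on the algebra property of $l_e^a$, the same $r^{-2m}$ bound on the operator-valued coefficients giving convergence (hence analyticity) for $s<\delta^2<r^2$, and the same one-line induction for $p\geq 3$. The only differences are cosmetic: you prove explicitly the algebra inequality $\|uv\|_{l_e^a}\leq\|u\|_{l_e^a}\|v\|_{l_e^a}$ that the paper uses implicitly, and your $|n_1-n_2|$, $\min(n_1,n_2)$ bookkeeping is a tidier version of the paper's index computation.
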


\begin{proof}
We only show the claim of the lemma for the case $p=2$.
For the cases $p\geq 3$, we can easily show the clam from the inductive definition of $\mathcal M_p$.
By \eqref{eq:14.30}, for $\mathbf v_k=\{v_{k,n}\}_{n\geq 0}$ ,
\begin{align*}
&\sum_{n_1\geq 0} (z^{n_1}+\bar z^{n_1})v_{1,n_1}\sum_{n_2\geq 0} (z^{n_2}+\bar z^{n_2})v_{2,n_2}\\&=
\sum_{n_1,n_2\geq 0}(z^{n_1+n_2}+\bar z^{n_1+n_2})v_{1,n_1}v_{2,n_2}+\sum_{n_1,n_2\geq 0}(z^{n_1}\bar z^{n_2}+\bar z^{n_1}z^{n_2})v_{1,n_1}v_{2,n_2}\\&=
\sum_{n\geq 0}(z^n+\bar z^n)\(\sum_{n\geq n_1\geq 0}v_{1,n_1}v_{2,n-n_1}+\sum_{n_2\geq 0}|z|^{2n_2}v_{1,n+n_2}v_{2,n_2}+\sum_{n_1>0}|z|^{2n_1}v_{1,n_1}v_{2,n+n_1}\).
\end{align*}
Therefore, we have
\begin{align*}
&\mathcal M(|z|^2,\mathbf v_1,\mathbf v_2)=\sum_{m\geq 0}|z|^2 \mathbf m_m(\mathbf v_1,\mathbf v_2),\quad \text{where}\\
&\mathbf m_0(\mathbf v_1,\mathbf v_2)=\{m_{0,n}(\mathbf v_1,\mathbf v_2)\}_{n\geq 0}=\{v_{1,n}v_{2,0}+\sum_{n\geq n_1\geq 0}v_{1,n_1}v_{2,n-n_1}\}_{n\geq 0},\\
&\mathbf m_m(\mathbf v_1,\mathbf v_2)=\{m_{m,n}(\mathbf v_1,\mathbf v_2)\}_{n\geq 0}=\{v_{1,n+m}v_{2,m}+v_{1,m}v_{2,n+m}\}_{n\geq 0},\quad m\geq 1.
\end{align*}
Now,
\begin{align*}
\|\mathbf m_0(\mathbf v_1,\mathbf v_2)\|_{a,r}& \leq \sum_{n\geq 0} r^n\(\|v_{1,n}\|_{l_e^a}\|v_{2,0}\|_{l_e^a}+\sum_{n\geq n_1\geq 0} \|v_{1,n_1}\|_{l_e^a}\|v_{2,n-n_1}\|_{l_e^a}\)\\&\leq 2\|\mathbf v_1\|_{a,r}\|\mathbf v_2\|_{a,r},
\end{align*}
and
\begin{align*}
\|\mathbf m_m(\mathbf v_1,\mathbf v_2)\|_{a,r}& \leq r^{-2m}\sum_{n\geq 0} r^{n+2m}\(\|v_{1,n+m}\|_{l_e^a}\|v_{2,m}\|_{l_e^a}+\|v_{1,m}\|_{l_e^a}\|v_{2,n+m}\|_{l_e^a}\)\\&\leq 2 r^{-2m}\|\mathbf v_1\|_{a,r}\|\mathbf v_2 \|_{a,r}.
\end{align*}
Thus, we have $\mathbf m_m\in \mathcal L^2(X_{a,r};X_{a,r})$.
Further, if $|z|^2<r^2$, $\sum_{m\geq 0}|z|^{2m} \|\mathbf m_m \|_{\mathcal L^2(X_{a,r};X_{a,r})}$ converges.
Therefore, we have $\mathcal M \in C^\omega(B_\R(\delta^2);\mathcal L^2(X_{a,r};X_{a,r}))$.
\end{proof}

We define $\mathcal C$ by
\begin{align*}
\mathcal C(\mathbf v):=\<\phi,v_1\>,\quad \text{where}\quad \mathbf v=\{v_n\}_{n\geq 0}.
\end{align*}
Since $|\mathcal C(\mathbf v)|\leq \|\phi\|_{l^2} \|v_1\|_{l^2}\leq r^{-1}\|\phi\|_{l^2} \|\mathbf v\|_{a,r}$, we see $\mathcal C\in \mathcal L(X_{a,r};\R)$.
Using, $\mathcal C$ and $\mathcal M_p$, we can express $\varepsilon$ by
\begin{align}\label{15}
\varepsilon(|z|^2,\mathbf v)=\mathcal C\circ \mathcal M_p(|z|^2,\Phi_0+\mathbf v).
\end{align}
Therefore, we have 
\begin{align}\label{16}
\varepsilon\in C^\omega(B_{\R}(\delta^2)\times X_{a,r};\R)),\quad\text{ with}\quad |\varepsilon(|z|^2,\mathbf v)|\lesssim r^{-1}(r^p+\|\mathbf v\|_{a,r}^p).
\end{align}

\begin{remark}\label{rem:epsilon}
If $p$ is even, then $\varepsilon(|z|^2,0)=0$ because $(z+\bar z)^p$ has no term such as $z|z|^{2m}$.
\end{remark}
%
%

\begin{proof}[Proof of Theorem \ref{thm:1}]
Let $\delta< r$.
By Lemma \ref{lem:1.1} and \eqref{16}, we have $\mathbf \Phi \in C^\omega(B_\R(\delta^2)\times X_{a,r}^c;X_{a,r}^c)$. 
Further, for $\mathbf v_1,\mathbf v_2\in X_{a,r}$ and $z\in B_\C(\delta)$,
\begin{align*}
\|\mathbf \Phi(|z|^2,\mathbf v_1)\|_{a,r}&\lesssim \(r^{-1}\|\mathbf v_1\|_{a,r}+1\)(r+\|\mathbf v_1\|)^p,\\
\|\mathbf \Phi(|z|^2,\mathbf v)-\mathbf \Phi(|z|^2,\mathbf v_2)\|_{a,r}&\lesssim \(r^{-1}\(\|\mathbf v_1\|_{a,r}+\|\mathbf v_2\|_{a,r}\)+1\)(r+\|\mathbf v_1\|+\|\mathbf v_2\|_{a,r})^{p-1}\|\mathbf v_1-\mathbf v_2\|_{a,r}.
\end{align*}
Therefore, we see that $\mathbf \Phi(|z|^2,\cdot)$ is a contraction mapping on $B_{X_{a,r}^c}(Cr^p)$ for some $C>0$ and $r\ll1$.
By contraction mapping theorem, we have $\mathbf v\in C^\omega(B_{\R}(\delta^2);X_{a,r}^c)$ s.t.\ $\mathbf v(|z|^2)$ satisfies \eqref{eq:14.6}.

We have Theorem \ref{thm:1} because $\Phi[z]=(z+\bar z)\phi+\sum_{n\geq 0}(z^n+\bar z^n)v_n(|z|^2)$ is a solution of \eqref{DNLKG}.
Notice that the estimate \eqref{eq:6} follows from Remark \ref{rem:epsilon} above.
\end{proof}

\appendix

\section{Exponential decay of eigenfunction of $H$}

This appendix is devoted for the proof of the following proposition.

\begin{proposition}\label{prop:1}
Let $e\in \R\setminus[0,4]$ be the eigenvalue of $H=-\Delta + V$ and let $\phi\in l^2$ be the normalized eigenfunction of $H$ associated $e$.
Further, assume $\mathrm{dim}\mathrm{ker}(H -e)=1$.
Then, there exists $a>0$ s.t.\ $\phi \in l_e^a$.
\end{proposition}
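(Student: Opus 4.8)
The plan is to prove exponential decay by a Combes--Thomas argument built on the \emph{free} resolvent. Since $e\notin[0,4]=\sigma(-\Delta)$, the operator $R_0:=(-\Delta-e)^{-1}$ is bounded on $l^2$ with $\|R_0\|_{\mathcal L(l^2)}=1/d$, where $d:=\mathrm{dist}(e,[0,4])>0$. Rewriting the eigenequation $(H-e)\phi=0$ as $(-\Delta-e)\phi=-V\phi$ gives the relation $\phi=-R_0V\phi$. The idea is to conjugate this relation by bounded exponential weights, derive a bound on the weighted eigenfunction that is \emph{uniform} in the truncation parameter, and then pass to the limit by monotone convergence to conclude $\phi\in l_e^a$.

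First I would establish the Combes--Thomas bound for $R_0$. For $n\in\N$ set $F_n(j):=\min(|j|,n)$ and let $W_n$ denote multiplication by $e^{aF_n}$, which is bounded and boundedly invertible. A direct computation gives $W_n(-\Delta)W_n^{-1}=-\Delta+K_n$, where $K_n$ is supported on the two off-diagonals with entries $-(e^{a(F_n(j)-F_n(j\pm1))}-1)$; its diagonal vanishes. Since $F_n$ is $1$-Lipschitz, each entry is bounded by $e^a-1$, so by the Schur test $\|K_n\|_{\mathcal L(l^2)}\le 2(e^a-1)\lesssim a$, uniformly in $n$. Factoring $W_n(-\Delta-e)W_n^{-1}=(1+K_nR_0)(-\Delta-e)$ and fixing $a$ small enough that $\|K_nR_0\|\le 1/2$, a Neumann series shows $\tilde R_n:=W_nR_0W_n^{-1}=R_0(1+K_nR_0)^{-1}$ is bounded with $\|\tilde R_n\|_{\mathcal L(l^2)}\le 2/d$, uniformly in $n$.

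The main obstacle appears next. Applying $W_n$ to $\phi=-R_0V\phi$, and using that the diagonal operator $W_n$ commutes with the multiplication operator $V$, I obtain $\phi_n=-\tilde R_n V\phi_n$ for $\phi_n:=W_n\phi$. This relation is circular, since $\phi_n$ sits on both sides and the crude estimate $\|\phi_n\|\le\|\tilde R_nV\|\,\|\phi_n\|$ need not close. I would resolve this by exploiting $V(j)\to0$: split $V=V_{\le m}+V_{>m}$, where $V_{\le m}$ is the restriction of $V$ to $\{|j|\le m\}$. Since $\sup_{|j|>m}|V(j)|\to0$, fix $m$ so large that $\|\tilde R_nV_{>m}\|\le(2/d)\sup_{|j|>m}|V(j)|\le1/2$ for every $n$; then the tail term is absorbed into the left-hand side. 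For the compact part, $V_{\le m}\phi_n$ is supported in $\{|j|\le m\}$, where $e^{a\min(|j|,n)}\le e^{am}$ once $n\ge m$, so $\|V_{\le m}\phi_n\|\le e^{am}\|V\|_\infty\|\phi\|$ independently of $n$. Combining, $\tfrac12\|\phi_n\|\le\|\tilde R_n\|\,\|V_{\le m}\phi_n\|\le(2/d)e^{am}\|V\|_\infty\|\phi\|$ for all $n\ge m$. Hence $\sup_n\|\phi_n\|_{l^2}<\infty$, and since $|W_n\phi(j)|\nearrow|(T_a\phi)(j)|$ pointwise, monotone convergence yields $\|T_a\phi\|_{l^2}<\infty$, i.e.\ $\phi\in l_e^a$.

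I note that this argument uses only that $\phi$ is an $l^2$ eigenfunction and that $e\notin[0,4]$; the simplicity hypothesis $\dim\ker(H-e)=1$ is not actually needed for the decay. An alternative and, in one dimension, entirely elementary route is the transfer-matrix method: the eigenequation is the recurrence $\phi(j+1)=(2+V(j)-e)\phi(j)-\phi(j-1)$, whose limiting characteristic roots $\lambda_\pm$ (the roots of $\lambda^2-(2-e)\lambda+1=0$, with product $1$) satisfy $|\lambda_-|<1<|\lambda_+|$ precisely when $e\notin[0,4]$; since $\phi\in l^2$ decays, Poincar\'e's theorem forces $\phi(j+1)/\phi(j)\to\lambda_-$, giving the sharp exponential rate. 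I would nonetheless present the Combes--Thomas version, since it is robust, self-contained, and does not rely on invoking the Poincar\'e--Perron theory.
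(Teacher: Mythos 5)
Your proof is correct, and it takes a genuinely different route from the paper's. The paper also conjugates by exponential weights, but it applies the weight to the \emph{full} operator: Lemma \ref{lem:2} shows $T_aHT_a^{-1}=H+B_a$ with $\|B_a\|_{\mathcal L(l^2)}\lesssim a$, and Lemma \ref{lem:a1} then runs a Lyapunov--Schmidt/contraction argument to produce an $l^2$ eigenpair $(e_a,\phi_a)$ of the conjugated operator near $(e,\phi)$; since $e$ is isolated one gets $e_a=e$ for small $a$, and since $\dim\ker(H-e)=1$ the function $T_a^{-1}\phi_a\in l_e^a$ must be proportional to $\phi$ --- this identification step is exactly where the simplicity hypothesis is used, and it is what transfers the decay to the given eigenfunction. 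You instead never touch the eigenvalue problem for a perturbed operator: you conjugate the \emph{free} resolvent $R_0=(-\Delta-e)^{-1}$ by truncated weights $W_n$, close the resulting circular identity $\phi_n=-\tilde R_nV\phi_n$ by splitting $V$ into a compactly supported piece plus a tail made small via $V(j)\to0$, obtain a bound on $\|W_n\phi\|_{l^2}$ uniform in $n$, and conclude by monotone convergence. Each step checks out: the Schur-test bound $\|K_n\|\lesssim a$ uniform in $n$ (by $1$-Lipschitzness of $\min(|j|,n)$), the factorization $W_n(-\Delta-e)W_n^{-1}=(1+K_nR_0)(-\Delta-e)$ and Neumann series, the commutation of the diagonal operators $W_n$ and $V$, and the absorption of $\|\tilde R_nV_{>m}\|\le 1/2$. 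What your version buys is robustness and generality: it bounds the weighted norm of $\phi$ itself, so, as you note, it needs neither simplicity nor even isolation of $e$, only $e\notin[0,4]$ and the decay of $V$ (which also makes $V$ bounded, as your estimate of the compact piece implicitly requires). What the paper's version buys is that it stays within standard perturbation theory for isolated eigenvalues, reuses the conjugation $T_a$ and Lemma \ref{lem:2} as stated, and exhibits $\phi$ (up to scalar) directly as $T_a^{-1}\phi_a$ with $\phi_a\in l^2$. Your transfer-matrix remark is also sound in one dimension (the characteristic roots satisfy $|\lambda_-|<1<|\lambda_+|$ precisely when $e\notin[0,4]$), though as presented it is a sketch rather than a proof; the Combes--Thomas argument you give in full is the one that stands on its own.
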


Set $T_a: l_e^a \to l^2$ by
\begin{align*}
\(T_a u\)(j):= e^{a|j|} u(j).
\end{align*}

Before the proof of Proposition \ref{prop:1}, we claim that the following fact holds.
\begin{lemma}\label{lem:a1}
Under the assumptions of Proposition \ref{prop:1}, for sufficiently small $a>0$ there exist $e_a\in \R$ and $\phi_a\in l^2(\Z)\setminus\{0\}$ s.t.\ $e_a\to e$ as $a\to 0$ and
\begin{align}\label{2}
T_a(-\Delta + V) T_a^{-1} \phi_a = e_a \phi_a.
\end{align}
\end{lemma}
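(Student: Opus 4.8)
The plan is to realize the conjugated operator $H_a := T_a(-\Delta+V)T_a^{-1}$ as an explicit bounded operator on $l^2(\Z)$ that converges to $H$ in operator norm as $a\to 0$, and then to produce $e_a$ and $\phi_a$ by regular (Riesz-projection) perturbation theory around the isolated simple eigenvalue $e$.

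First I would compute $H_a$ explicitly. Since $V$ is a multiplication operator it commutes with $T_a$, so $H_a=T_a(-\Delta)T_a^{-1}+V$, and a direct computation gives, for $u\in l^2$,
$$(H_a u)(j)=2u(j)-e^{a(|j|-|j+1|)}u(j+1)-e^{a(|j|-|j-1|)}u(j-1)+V(j)u(j).$$
Comparing with $H$ and using $\bigl||j|-|j\pm1|\bigr|\le 1$ for all $j$, each off-diagonal coefficient of $H_a-H$ is bounded by $e^{a}-1$, so $\|H_a-H\|_{\mathcal L(l^2)}\le 2(e^{a}-1)\to0$ as $a\to0$. In fact $|j|-|j\pm1|$ takes only the values $\pm1$, so $a\mapsto H_a\in\mathcal L(l^2)$ is even real analytic, which could later be used to upgrade the conclusion to analytic dependence of $e_a$ on $a$.

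Second I would set up the spectral picture for $H$. Because $V(j)\to0$, $V$ is a compact perturbation of $-\Delta$, whose spectrum (via the Fourier transform, $-\Delta\simeq 2-2\cos\theta$) is exactly $[0,4]$; hence $\sigma_{\mathrm{ess}}(H)=[0,4]$ by Weyl's theorem. Since $e\notin[0,4]$ and $\dim\ker(H-e)=1$, $e$ is an isolated eigenvalue of $H$ of algebraic multiplicity one. I fix a small circle $\Gamma$ centered at the real point $e$, symmetric about $\R$, enclosing no other spectrum of $H$. On $\Gamma$ the resolvent $(z-H)^{-1}$ is uniformly bounded, so for $a$ small the norm bound above gives $\Gamma\subset\rho(H_a)$, and the Riesz projections
$$P_a=\frac{1}{2\pi\im}\oint_\Gamma (z-H_a)^{-1}\,dz$$
converge in norm to the spectral projection $P$ of $H$ at $e$. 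Projections close in norm have equal rank, so $\mathrm{rank}\,P_a=\mathrm{rank}\,P=1$ for small $a$; thus $H_a$ has a single eigenvalue $e_a$ inside $\Gamma$, and $e_a=\mathrm{tr}(H_aP_a)\to\mathrm{tr}(HP)=e$ as $a\to0$.

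Finally I would settle reality and conclude. All matrix entries of $H_a$ being real, $H_a$ commutes with complex conjugation $C$; since $\Gamma$ is symmetric about $\R$ and $C(z-H_a)^{-1}C=(\bar z-H_a)^{-1}$, one gets $CP_aC=P_a$. As $P_a$ has rank one, the set of eigenvalues of $H_a$ inside $\Gamma$ is a single point invariant under conjugation, which forces $e_a\in\R$; and $\phi_a:=P_a\chi$ for any $\chi$ with $P_a\chi\neq0$ is a nonzero eigenfunction that may be taken real because $P_a$ commutes with $C$. This is exactly \eqref{2}. The main obstacle is that $T_a$ is not unitary, so $H_a$ is not self-adjoint and self-adjoint perturbation theory does not apply directly; the work therefore goes into the explicit norm estimate $H_a\to H$ and into the conjugation-symmetry argument that keeps the perturbed eigenvalue on the real axis.
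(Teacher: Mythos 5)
Your proof is correct, but its second half takes a genuinely different route from the paper's. Both arguments share the same first step: the explicit computation showing that $T_aHT_a^{-1}=H+B_a$ with $\|B_a\|_{\mathcal L(l^2)}\lesssim a$ (this is exactly the paper's Lemma \ref{lem:2}; your bound $2(e^a-1)$ is the same estimate made quantitative, and your remark that the exponents $|j|-|j\pm1|$ only take the values $\pm1$ is what makes $a\mapsto H_a$ analytic). From there the paper proceeds by hand: it makes the Lyapunov--Schmidt ansatz $e_a=e+\delta_a$, $\phi_a=\phi+u_a$ with $\langle u_a,\phi\rangle=0$, projects the eigenvalue equation onto $\phi$ and onto $P_cl^2$, inverts $(H-e)$ on $P_cl^2$, and solves for $u_a$ by the contraction mapping theorem in a ball of radius $O(a)$; since $H$, $B_a$ and $\phi$ are real, the iteration lives in real $l^2$ and the reality of $e_a=e+\langle B_a(\phi+u_a),\phi\rangle$ comes for free. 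You instead invoke Kato-style perturbation theory: uniform resolvent bounds on a contour $\Gamma$ around the isolated simple eigenvalue $e$, norm convergence of the Riesz projections $P_a\to P$, rank stability of nearby projections, hence a unique eigenvalue $e_a$ of $H_a$ inside $\Gamma$; because $H_a$ is not self-adjoint you then need the additional conjugation-symmetry argument to force $e_a\in\R$, which you carry out correctly. What each approach buys: the paper's is elementary and self-contained (nothing beyond contraction mapping and the invertibility of $H-e$ on $P_cl^2$), with reality automatic; yours is shorter modulo standard facts (Weyl's theorem giving $\sigma_{\mathrm{ess}}(H)=[0,4]$, rank stability of projections) and yields strictly more information --- uniqueness of the spectrum of $H_a$ near $e$, and, combined with analyticity of $a\mapsto H_a$, analytic dependence of $e_a$ and $P_a$ on $a$ --- which the fixed-point argument does not directly provide. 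One small point to polish: the step $e_a=\mathrm{tr}(H_aP_a)\to\mathrm{tr}(HP)$ needs a word of justification since the trace is not norm-continuous; it suffices to note that $H_aP_a-HP$ has rank at most two, so $|\mathrm{tr}(H_aP_a-HP)|\leq 2\|H_aP_a-HP\|\to 0$, or alternatively to shrink $\Gamma$ and use that $e_a$ lies inside it.
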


Proposition \ref{prop:1} is a direct consequence of Lemma \ref{lem:a1}
\begin{proof}[Proof of Proposition \ref{prop:1} assuming Lemma \ref{lem:a1}]
Since $\phi_a$ satisfies \eqref{2}, we have
\begin{align*}
HT_a^{-1} \phi_a = e_a T_{a}^{-1}\phi_a.
\end{align*}
Since $\phi_a\in l^2$, $T_a^{-1}\phi_a\in l_e^a\hookrightarrow l^2$.
Therefore, $e_a$ is an eigenvalue of $H$.
However, since $e_a\to e$ as $a\to 0$ and $e$ is an isolated eigenvalue, $e_a$ has to be equal to $e$ if $a>0$ is sufficiently small.
Therefore, since $\mathrm{dim}\mathrm{ker}(H-e)=1$, there exists $c_a\in \R$ s.t.
\begin{align*}
\phi=c_a T_a^{-1}\phi_a.
\end{align*}
Finally,
\begin{align*}
\|\phi\|_{l_e^a}=|c_a|\|T_a^{-1}\phi_a\|_{l_e^a}=|c_a|\|\phi_a\|_{l^2}<\infty.
\end{align*}
Therefore, we have the conclusion.
\end{proof}

Before proving Lemma \ref{lem:a1}, we show that $T_a H T_a^{-1}$ is a small bounded perturbation of $H$.

\begin{lemma}\label{lem:2}
There exists a bounded operator $B_a:l^2\to l^2$ s.t. $\|B_a\|_{l^2\to l^2}\lesssim a$ s.t.
\begin{align*}
T_a H T_a^{-1}= H + B_a.
\end{align*}
\end{lemma}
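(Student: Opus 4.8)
The plan is to compute the conjugated operator $T_a H T_a^{-1}$ explicitly and to read off $B_a$ as the difference $T_a H T_a^{-1}-H$. First I would note that $V$ is a multiplication (diagonal) operator and $T_a$ is also diagonal, so the two commute: indeed $(T_a V T_a^{-1}u)(j)=e^{a|j|}V(j)e^{-a|j|}u(j)=(Vu)(j)$, hence $T_a V T_a^{-1}=V$. Thus the whole perturbation comes from the discrete Laplacian, and it remains to conjugate $-\Delta$.

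Next I would carry out the conjugation of $-\Delta$. Writing $(-\Delta u)(j)=2u(j)-u(j+1)-u(j-1)$ and using $(T_a^{-1}u)(j)=e^{-a|j|}u(j)$, a direct calculation gives
\begin{align*}
(T_a(-\Delta) T_a^{-1}u)(j)=2u(j)-e^{a(|j|-|j+1|)}u(j+1)-e^{a(|j|-|j-1|)}u(j-1).
\end{align*}
The diagonal term $2u(j)$ is unchanged, so subtracting $-\Delta$ leaves only the off-diagonal contributions:
\begin{align*}
(B_a u)(j)=-\big(e^{a(|j|-|j+1|)}-1\big)u(j+1)-\big(e^{a(|j|-|j-1|)}-1\big)u(j-1).
\end{align*}

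The key observation is the reverse triangle inequality $\big||j|-|j\pm1|\big|\le 1$, which forces $|a(|j|-|j\pm1|)|\le a$ uniformly in $j$. Consequently each coefficient obeys $\big|e^{a(|j|-|j\pm1|)}-1\big|\le e^a-1\lesssim a$ for $a$ in a bounded range. Thus $B_a$ is a sum of two shifted multiplication operators: writing $S_\pm$ for the isometric shifts $(S_+u)(j)=u(j+1)$, $(S_-u)(j)=u(j-1)$ and $D^\pm$ for multiplication by the bounded sequences $e^{a(|j|-|j\pm1|)}-1$, we have $B_a=-D^+S_+-D^-S_-$ on $l^2$. Since $\|S_\pm\|_{l^2\to l^2}=1$ and $\|D^\pm\|_{l^2\to l^2}=\sup_j|e^{a(|j|-|j\pm1|)}-1|\lesssim a$, the triangle inequality yields $\|B_a\|_{l^2\to l^2}\lesssim a$, as claimed.

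I do not expect a serious obstacle: the argument is a one-line conjugation computation followed by an elementary exponent estimate. The only points requiring care are that the coefficient bound must be \emph{uniform} in $j$ (exactly what the reverse triangle inequality supplies), and that the diagonal term genuinely cancels, so that $B_a$ consists only of off-diagonal shift contributions; this cancellation is what keeps $\|B_a\|$ of order $a$ rather than order $1$.
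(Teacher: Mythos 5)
Your proof is correct and follows essentially the same route as the paper: conjugate $-\Delta$ (noting $V$ commutes with $T_a$), identify $B_a$ as the off-diagonal operator with coefficients $1-e^{a(|j|-|j\pm 1|)}$, and bound these uniformly by $\lesssim a$ via the reverse triangle inequality. Your decomposition of $B_a$ into multiplications composed with shift isometries merely makes explicit the operator-norm step that the paper leaves implicit.
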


\begin{proof}
By,
\begin{align*}
T_aH T_a^{-1} = H +T_a (-\Delta)T_a^{-1}+\Delta,
\end{align*}
it suffices to show
\begin{align*}
\|T_a (-\Delta)T_a^{-1}+\Delta\|_{l^2\to l^2}\lesssim a.
\end{align*}
Indeed, we have
\begin{align*}
\(\(T_a (-\Delta)T_a^{-1}+\Delta\)u\)(j)=\(1-e^{a(|j|-|j+1|)}\)u(j+1)+\(1-e^{a(|j|-|j-1|)}\)u(j-1),
\end{align*}
and
\begin{align*}
|1-e^{a(|j|-|j+1|)}|+|1-e^{a(|j|-|j-1|)}|\lesssim a.
\end{align*}
Therefore,
\begin{align*}
\|\(T_a (-\Delta)T_a^{-1}+\Delta\)u\|_{l^2}\lesssim a\|u\|_{l^2}
\end{align*}
and we have the conclusion.
\end{proof}

We now prove Lemma \ref{lem:a1}.

\begin{proof}[Proof of Lemma \ref{lem:a1}]
We show that there exist $\delta_a>0$ and $u_a\perp \phi$ (i.e. $\<\phi,u_a\>=0$) s.t.\  $e_a=e+\delta_a$ and $\phi_a=\phi+u_a$ satisfying \eqref{2}. 
Let $B_a$ be the bounded operator given in lemma \ref{lem:2}.
Then, it suffices to find a pair $(e_a,u_a)$ satisfying
\begin{align}\label{3}
(H -e)u_a = \delta_a (\phi + u_a) - B_a (\phi + u_a).
\end{align}
Taking the inner-product between $\eqref{3}$ and $\phi$, we have
\begin{align}\label{4}
\delta_a=\<B_a (\phi + u_a),\phi\>.
\end{align}
Applying $P_c$ to \eqref{3} and taking the inverse of $(H-e)$ (this is invertible in $P_cl^2$, where $P_c$ is the orthogonal projection with respect to $\phi$), we have
\begin{align}\label{5}
u_a= (H -e)^{-1}P_c\(\(\<B_a(\phi+u_a),\phi\>-B_a\)(\phi+u_a)\).
\end{align}

\noindent
So, setting $\Psi(u)$ equal to the r.h.s.\ of \eqref{5},
it suffices to find a fixed point of $\Psi$.
First,
\begin{align*}
\|\Psi(0)\|_{l^2}=\|(H -e)^{-1}P_c\(\(\<B_a\phi,\phi\>-B_a\)\phi\)\|_{l^2}\leq C_0 a,
\end{align*}
for some constant $C_0>0$ independent to $a$.
Second, for $v,w\in l^2$ with  $\|v\|_{l^2},\|w\|_{l^2}\leq 1$, we have
\begin{align*}
\|\Psi(v)-\Psi(w)\|_{l^2}&=\|(H-e)^{-1}P_c\(\<B_a(v-w),\phi\>(\phi+v) +\(\<B_a(\phi+w),\phi\>-B_a\)(v-w) \)\|_{l^2}\\&
\leq C_1a \|v-w\|_{l^2},
\end{align*}
for some constant $C_1>0$ independent of $a$, $v$ and $w$.
Therefore, if $\max(C_0,C_1)a<1/2$, for $u,v$ with $\|u\|_{l^2}, \|v\|_{l^2}<2C_0a$, we have 
\begin{align*}
\|\Psi(u)\|_{l^2}\leq \|\Psi(u)-\Psi(0)\|_{l^2}+\|\Psi(0)\|_{l^2}\leq C_1a\cdot 2C_0a+C_0a<2C_0a,
\end{align*}
and $\|\Psi(u)-\Psi(v)\|_{l^2}\leq \frac 1 2 \|u-v\|_{l^2}$.
This implies that $\Psi$ is a contraction mapping on $B_{l^2}(0,2C_0a)=\{u\in l^2\ |\ \|u\|_{l^2}\leq 2C_0a$.
Therefore, we have the conclusion.
\end{proof}

\subsection*{Acknowledgments}   
The author was supported by the Japan Society for the Promotion of Science (JSPS) with the Grant-in-Aid for Young Scientists (B) 15K17568.

%

Department of Mathematics and Informatics,
Faculty of Science,
Chiba University,
Chiba 263-8522, Japan

{\it E-mail Address}: {\tt maeda@math.s.chiba-u.ac.jp}

\end{document}